\providecommand{\U}[1]{\protect\rule{.1in}{.1in}}
\newtheorem{theorem}{Theorem}[section]
\newtheorem{corollary}[theorem]{Corollary}
\numberwithin{equation}{section}
\begin{document}
\title[Spaceability of sets of operators between sequence spaces]{Spaceability of the sets of surjective and injective operators between
sequence spaces}

\begin{abstract}
We investigate algebraic structures within sets of surjective and injective
linear operators between sequence spaces, completing results of Aron et al.

\end{abstract}
\author[D. Diniz]{Diogo Diniz}
\address{Unidade Acad\^{e}mica de Matem\'{a}tica e Estat\'{\i}stica \\
Universidade Federal de Campina Grande \\
58109-970 - Campina Grande, Brazil.}
\email{diogodpss@gmail.com}
\author[V. F\'{a}varo]{Vin\'icius V. F\'{a}varo}
\address{Faculdade de Matem\'{a}tica \\
Universidade Federal de Uberl\^{a}ndia \\
38400-902 - U\-ber\-l\^{a}n\-dia, Brazil.}
\email{vvfavaro@gmail.com}
\author[D. Pellegrino]{Daniel Pellegrino}
\address{Departamento de Matem\'{a}tica \\
Universidade Federal da Para\'{\i}ba \\
58.051-900 - Jo\~{a}o Pessoa, Brazil.}
\email{dmpellegrino@gmail.com}
\author[A. Raposo Jr.]{Anselmo Raposo Jr.}
\address{Departamento de Matem\'{a}tica \\
Universidade Federal do Maranh\~{a}o \\
65085-580 - S\~{a}o Lu\'{\i}s, Brazil.}
\email{anselmo.junior@ufma.br}
\thanks{D. Diniz was partially supported by CNPq grants No.~303822/2016-3,
No.~406401/2016-0 and No.~421129/2018-2 and Grant
2019/0014 Paraiba State Research Foundation (FAPESQ)}
\thanks{V. F\'{a}varo was supported by CNPq 310500/2017-6 and FAPEMIG Grant PPM-00217-18}
\thanks{D. Pellegrino was partially supported by CNPq 307327/2017-5 and Grant
2019/0014 Paraiba State Research Foundation (FAPESQ)}
\keywords{Spaceability; lineability; sequence spaces}
\subjclass[2010]{15A03, 47B37, 47L05}
\maketitle

\section{Introduction}

If $V$ is a vector space and $\alpha$ is a cardinal number, a subset $A$ of
$V$ is called $\alpha\text{-lineable}$ in $V$ if $A\cup\left\{  0\right\}  $
contains an $\alpha$-dimensional linear subspace $W$ of $V$. When $V$ has a
topology and the subspace $W$ can be chosen to be closed, we say that $A$ is
spaceable. This line of research has its starting point with the seminal paper
\cite{Aron} by Aron, Gurariy, and Seoane-Sep\'{u}lveda and nowadays has been
successfully explored in several research branches, being studied in various
contexts with increasingly relevant applications in areas such as
norm-attaining operators, multilinear forms, homogeneous polynomials, sequence
spaces, holomorphic mappings, absolutely summing operators, Peano curves,
fractals, topological dynamical systems and many others (see, for instance,
{\cite{Nacib, book, bba, Bernal, Pellegrino2, BF, CFS, cariellojfa,
SSJB,FGMR}} and the references therein).

From now on all vector spaces are considered over a fixed scalar field
$\mathbb{K}$ which can be either $\mathbb{R}$ or $\mathbb{C}$. For any set $X$
we shall denote by $\operatorname*{card}\left(  X\right)  $ the cardinality of
$X$; in particular, we denote $\mathfrak{c}=\operatorname*{card}\left(
\mathbb{R}\right)  $ and $\aleph_{0}=\operatorname*{card}\left(
\mathbb{N}\right)  $.

In this paper we are interested in lineability and spaceability properties of
sets of injective and surjective continuous linear operators between sequence
spaces. The following results were recently proved in {\cite{bb}:}

\begin{theorem}
{\cite[Theorem 4.1 and Corollary 3.4]{bb}} The set
\begin{equation}
\mathcal{S}=\left\{  T\colon\ell_{p}\rightarrow\ell_{p}:T\text{ is linear,
continuous and surjective}\right\}  \label{098}%
\end{equation}
is spaceable in $\mathcal{L}\left(  \ell_{p};\ell_{p}\right)  $ for all
$p\in\lbrack1,\infty]$ and the set
\begin{equation}
\mathcal{I}=\left\{  T:c_{0}\rightarrow c_{0}:T\text{ is linear, continuous
and injective}\right\}  \label{099}%
\end{equation}
is spaceable in $\mathcal{L}\left(  c_{0},c_{0}\right)  $.
\end{theorem}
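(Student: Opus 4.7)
The plan is to exhibit, by a single combinatorial construction, closed infinite-dimensional subspaces $W\subset\mathcal{L}(\ell_p;\ell_p)$ and $W'\subset\mathcal{L}(c_0;c_0)$ whose nonzero elements are surjective and injective, respectively. The common setup fixes a partition $\mathbb{N}=\bigsqcup_{k=1}^{\infty}N_k$ into pairwise disjoint infinite sets, together with bijections $\phi_k\colon\mathbb{N}\to N_k$.

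For the surjective statement I define $R_k\in\mathcal{L}(\ell_p;\ell_p)$ by $(R_kx)_n=x_{\phi_k(n)}$. Each $R_k$ is a contraction and is itself surjective: given $y\in\ell_p$, the vector $x$ with $x_{\phi_k(n)}=y_n$ and $x_m=0$ for $m\notin N_k$ lies in $\ell_p$ with $\|x\|_p=\|y\|_p$ and $R_kx=y$. Let $W=\overline{\mathrm{span}}\{R_k:k\in\mathbb{N}\}$; the identity $R_k(e_{\phi_j(1)})=\delta_{jk}e_1$ makes the family linearly independent, so $W$ is infinite-dimensional. The heart of the argument is to show that every nonzero $T\in W$ is surjective. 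Approximating $T=\lim_n T_n$ by finite sums $T_n=\sum_k a_k^{(n)}R_k$, the relation $T_n(e_{\phi_k(1)})=a_k^{(n)}e_1$ combined with operator-norm convergence produces scalars $a_k:=\lim_n a_k^{(n)}$ for every $k$, and $T(e_m)=a_k e_{\phi_k^{-1}(m)}$ whenever $m\in N_k$. To see that $T\neq 0$ forces some $a_{k_0}\neq 0$, one uses density of $\mathrm{span}\{e_m\}$ in $\ell_p$ when $p<\infty$, and for $p=\infty$ the sign-choice computation $\|T_n\|_{\ell_\infty\to\ell_\infty}=\sum_k|a_k^{(n)}|$, which identifies $W$ isometrically with $\ell_1$. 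Given such $k_0$ and any $y\in\ell_p$, define $x$ supported in $N_{k_0}$ with $x_{\phi_{k_0}(n)}=y_n/a_{k_0}$. Then $R_kx=0$ for $k\neq k_0$, so $T_nx=a_{k_0}^{(n)}R_{k_0}x=(a_{k_0}^{(n)}/a_{k_0})y$, and letting $n\to\infty$ gives $Tx=y$.

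For the injective statement on $c_0$, set $S_k\in\mathcal{L}(c_0;c_0)$ by $S_k(e_n)=e_{\phi_k(n)}$ extended by continuity; each $S_k$ is an isometric embedding, hence injective. Because the images are supported on the pairwise disjoint sets $N_k$, a direct computation gives $\|\sum_k a_kS_k\|_{c_0\to c_0}=\sup_k|a_k|$, so $(a_k)\mapsto\sum_k a_kS_k$ extends to an isometric embedding of $c_0$ into $\mathcal{L}(c_0;c_0)$ whose closed image is the desired $W'$. For a nonzero $T=\sum_k a_kS_k\in W'$, choosing $a_{k_0}\neq 0$ the identity $(Tx)_{\phi_{k_0}(n)}=a_{k_0}x_n$ forces $Tx=0\Rightarrow x=0$.

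The main obstacle is the first step of the surjectivity argument: extracting the coordinates $(a_k)$ from a limit $T\in W$ and ruling out the possibility that $T\neq 0$ while all $a_k$ vanish. This is painless for $p<\infty$ by density of finite sequences, but the case $p=\infty$ requires the explicit computation $\|T_n\|_{\ell_\infty\to\ell_\infty}=\|a^{(n)}\|_1$ that identifies $W$ with $\ell_1$ and keeps the dictionary between operators in $W$ and their coordinates clean. Once this is in hand, surjectivity follows by exhibiting the explicit preimage supported on a single block $N_{k_0}$, and the $c_0$ argument is essentially the dual, cleaner story.
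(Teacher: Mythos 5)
Your proof is correct, and it is worth separating the two halves. For the surjective half you take essentially the same route as the paper's proof of Theorem \ref{t1}: your $R_k$ are (up to the choice of enumeration of the blocks) exactly the operators $S_k\colon\left(  a_j\right)  _{j=1}^{\infty}\mapsto\left(  a_j\right)  _{j\in\mathbb{N}_k}$, you take the closed span, recover the coefficients of a limit operator from its action on the vectors $e_{\phi_k(1)}$, and exhibit preimages supported on a single block $N_{k_0}$. The one genuine addition is your treatment of $p=\infty$: the paper's general theorem invokes $c_{00}$-density of $E$ precisely at the step where a nonzero limit coefficient is extracted, so it does not cover $\ell_\infty$ (the cited source treats $p=\infty$ by separate duality/matrix arguments), whereas your computation $\left\Vert \sum_k a_kR_k\right\Vert =\sum_k\left\vert a_k\right\vert$ on $\ell_\infty$ identifies the closed span isometrically with $\ell_1$ and settles that case by the same scheme. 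For the injective half you genuinely diverge from the paper: there an arbitrary injection $T\in\mathcal{I}$ is composed with the forward shifts $F^n$, the span is parametrized by $\ell_1$, and an induction on the coefficient limits shows the closure stays in $\mathcal{I}\cup\{0\}$; that extra work buys much more, namely an arbitrary infinite-dimensional domain $V$, an arbitrary standard Banach sequence space $E$, and $\left(1,\mathfrak{c}\right)$-spaceability, i.e.\ Theorem \ref{t2}. Your disjointly supported spreading isometries $S_k$ with the exact identity $\left\Vert \sum_k a_kS_k\right\Vert =\sup_k\left\vert a_k\right\vert$ make the closure analysis essentially trivial and produce a closed copy of $c_0$ (rather than $\ell_1$) inside $\mathcal{I}\cup\{0\}$, which is all the stated theorem requires, but the construction is tailored to $c_0\rightarrow c_0$ and does not by itself recover the paper's generality. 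Both halves, as written, do prove the quoted statement.
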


For surjective operators, the proof has some matrix arguments split for
different choices of $p$ and duality. In the case of injective operators, the
argument used in the proof is strongly connected with the $\sup$ norm of
$c_{0}$ having no immediate adaptation to $\ell_{p}$ spaces (see
{\cite[Theorem 3.3]{bb}).} The main results of the present paper extend, with
different techniques, the above results to a wide class of sequence spaces.
For instance, (\ref{098}) is extended to a class of sequence spaces
encompassing the spaces $\ell_{p}^{u}(X)$ of unconditionally $p$-summable
sequences; and (\ref{099}) is extended to a very general class of sequence
spaces containing $\ell_{p}^{u}(X)$, the spaces of weakly $p$-summable
sequences $\ell_{p}^{w}(X)$, among others. These classes of sequence spaces
will be formally defined in the beginning of Section 2. Our main results read
as follows:

\begin{theorem}
\label{t1}Let $E$ be a $c_{00}$-dense standard Banach sequence space. The set
\[
\mathcal{S}=\left\{  T\colon E\rightarrow E:T\text{ is linear, continuous and
surjective}\right\}
\]
is spaceable in $\mathcal{L}\left(  E;E\right)  $.
\end{theorem}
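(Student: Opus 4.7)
The strategy is to build a closed infinite-dimensional subspace $W \subset \mathcal{L}(E;E)$ with $W \setminus \{0\} \subset \mathcal{S}$, exploiting a partition of the coordinate indices.

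First, I would fix a decomposition $\mathbb{N} = \bigsqcup_{n \ge 1} A_n$ into pairwise disjoint infinite subsets, together with bijections $\sigma_n\colon \mathbb{N} \to A_n$. For each $n$ define $S_n\colon E \to E$ by $S_n(x)_k := x_{\sigma_n(k)}$, and set $W := \overline{\operatorname{span}}\{S_n : n \ge 1\}$. Using the ``standard'' properties of $E$, each $S_n$ is bounded and surjective; indeed, it has a bounded right inverse $J_n\colon E \to E$, the ``spread'' operator sending $y \in E$ to the sequence with entry $y_k$ at position $\sigma_n(k) \in A_n$ and $0$ elsewhere. Linear independence of the $S_n$'s is clear from $S_m(e_{\sigma_n(k)}) = \delta_{mn}\, e_k$, so $W$ is an infinite-dimensional closed subspace.

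The core step is showing that every nonzero $T \in W$ is surjective. Any such $T$ is an operator-norm limit $T = \lim_\ell T_\ell$ with $T_\ell = \sum_n \alpha_n^{(\ell)} S_n$ a finite sum. For $j \in A_n$ one has $T_\ell e_j = \alpha_n^{(\ell)} e_{\sigma_n^{-1}(j)}$, which forces the scalars $\beta_n := \lim_\ell \alpha_n^{(\ell)}$ to exist and satisfy $T e_j = \beta_n e_{\sigma_n^{-1}(j)}$ for every $j \in A_n$. Since $c_{00}$ is dense in $E$, if all the $\beta_n$ vanished then so would $T$; hence $T \neq 0$ forces some $\beta_{n_0} \neq 0$. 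Given $y \in E$, I take $x := \beta_{n_0}^{-1} J_{n_0}(y) \in E$. A direct computation on $c_{00}$-vectors $y^{(m)} \to y$ yields $T\bigl(\beta_{n_0}^{-1} J_{n_0}(y^{(m)})\bigr) = y^{(m)}$, and passing to the limit using continuity of $T$ and $J_{n_0}$ gives $T(x) = y$.

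The main obstacle I anticipate is the identification of the limit operator $T$ with its coordinate-wise ``diagonal'' action on the canonical basis, together with ensuring that the preimage recipe $y \mapsto \beta_{n_0}^{-1} J_{n_0}(y)$ produces an element of $E$ that $T$ really sends back to $y$. Both issues hinge on the precise definition of a $c_{00}$-dense standard Banach sequence space given in Section 2: the density of $c_{00}$ in $E$ (so that operators are determined by their values on $c_{00}$) and the boundedness of the spread maps $J_n$. Once these properties are extracted from the definitions, the coordinate computations sketched above should go through routinely.
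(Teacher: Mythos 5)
Your proposal is correct and follows essentially the same route as the paper: the same block operators $S_n$ built from a partition of $\mathbb{N}$ into infinite sets (your $\overline{\operatorname{span}}\{S_n\}$ coincides with the paper's $\overline{\operatorname{Im}(\Psi)}$ with $\Psi$ defined on $\ell_1$), the same extraction of limit coefficients $\beta_n$ by evaluating on coordinate vectors $xe_j$, and the same ``spread into the $n_0$-th block and rescale'' preimage. The only differences are cosmetic --- you approximate by finite sums and close the surjectivity step via $c_{00}$-density and continuity rather than the paper's coordinatewise limit formula --- and you should take each $\sigma_n$ to be the increasing enumeration of $A_n$ so that conditions (ii) and (iii) give boundedness of $S_n$ and $J_n$.
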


\begin{theorem}
\label{t2}Let $V$ be an infinite dimensional Banach space and let $E$ be a
standard Banach sequence space. The set%
\[
\mathcal{I}=\left\{  T\colon V\rightarrow E:T\text{\ is linear, continuous and
injective}\right\}
\]
is either empty or spaceable in $\mathcal{L}\left(  V;E\right)  $.
\end{theorem}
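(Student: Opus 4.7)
The plan is to leverage a single injective operator to manufacture infinitely many ``disjointly supported'' injective copies of it, then take their closed linear span. If $\mathcal{I}=\emptyset$ there is nothing to prove, so assume $\mathcal{I}\neq\emptyset$ and fix $T_0\in\mathcal{I}$. Partition $\mathbb{N}=\bigsqcup_{n\geq 1} A_n$ into infinitely many pairwise disjoint infinite subsets, and choose bijections $\sigma_n\colon\mathbb{N}\to A_n$. Because $E$ is a standard Banach sequence space, the spreading maps $R_n\colon E\to E$ that send $(x_k)_k$ to the sequence whose $\sigma_n(k)$-th coordinate equals $x_k$ and whose remaining coordinates vanish should be continuous linear injections (indeed, isometries for the motivating examples $\ell_p$, $c_0$, $\ell_p^{u}(X)$, $\ell_p^{w}(X)$). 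Setting $T_n:=R_n\circ T_0$ one gets an injective operator in $\mathcal{L}(V;E)$ whose range lies inside the sector $E_{A_n}:=\{y\in E:y_j=0\text{ for all }j\notin A_n\}$.

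Pairwise coordinate-disjointness of the sectors $E_{A_n}$ immediately yields linear independence of $\{T_n\}_{n\geq 1}$: any finite relation $\sum_n c_n T_n=0$ restricts sector-by-sector to $c_n R_n T_0=0$, forcing $c_n=0$ for each $n$. Consequently $W:=\overline{\operatorname{span}}\{T_n:n\geq 1\}$ is a closed infinite-dimensional subspace of $\mathcal{L}(V;E)$, so it remains to prove $W\setminus\{0\}\subset\mathcal{I}$.

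Given $S\in W\setminus\{0\}$, I would write $S=\lim_k S_k$ with $S_k=\sum_n c_n^{(k)} T_n$ a finite linear combination, and then exploit the continuity of coordinate projections on $E$ (a standing feature of standard Banach sequence spaces) to analyse $S$ coordinate-wise. For any $v\in V$ and any $n$, the restriction of $S_k(v)$ to $A_n$ is precisely $c_n^{(k)} R_n(T_0(v))$. Choosing $v$ such that $(T_0(v))_m\neq 0$ at some coordinate $m$ produces, for every $n$, a scalar limit $c_n:=\lim_k c_n^{(k)}$ together with the identification $S(v)|_{A_n}=c_n R_n(T_0(v))$. If $S(v)=0$, then $c_n T_0(v)=0$ for every $n$; since $S\neq 0$ forces some $c_n$ to be nonzero, we conclude $T_0(v)=0$ and hence $v=0$ by injectivity of $T_0$.

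The main obstacle will be making the coordinate-wise limit analysis airtight from the bare axioms of a standard Banach sequence space. Concretely, one must verify (i) that each $R_n$ is a well-defined bounded injection of $E$ into $E_{A_n}$, (ii) that norm convergence in $\mathcal{L}(V;E)$ transfers to coordinate-wise convergence through continuous coordinate projections, and (iii) that $S\neq 0$ genuinely forces some limit coefficient $c_n$ to be nonzero (this follows once some $S(v)$ is itself nonzero on some sector $A_n$). For the canonical classes $\ell_p$, $c_0$, $\ell_p^{u}(X)$, $\ell_p^{w}(X)$, etc., each of these steps is routine, so the real work is packaging them uniformly inside the abstract definition of \emph{standard Banach sequence space} introduced at the start of Section~2.
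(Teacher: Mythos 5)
Your proposal is correct, but it takes a genuinely different route from the paper's own proof of Theorem \ref{t2}. The paper fixes $T\in\mathcal{I}$, sets $T_{1}=T$ and $T_{n+1}=F^{n}\circ T$ using the forward shifts of axiom (iii), and parametrizes $\sum_{n}\lambda_{n}T_{n}$ by $(\lambda_{n})\in\ell_{1}$; since the shifted ranges overlap, recovering the coefficients of a limit operator $S$ requires an induction on coordinates, but the payoff is an explicitly $\mathfrak{c}$-dimensional closed subspace that contains $T$ itself, which is what gives the $(1,\mathfrak{c})$-spaceability refinement stated after the theorem. You instead transplant the disjoint-support idea from the paper's proof of Theorem \ref{t1}: spreading $T_{0}$ onto pairwise disjoint infinite sets $A_{n}$ makes the closure argument cleaner, because each coefficient is read off from a single coordinate via $\pi_{\sigma_{n}(m)}\left(S(v)\right)=c_{n}\,\pi_{m}\left(T_{0}(v)\right)$, and the dichotomy (all $c_{n}=0$ forces $S=0$; some $c_{n}\neq0$ forces $S$ injective, since $S(v)=0$ then gives $\pi_{m}(T_{0}(v))=0$ for all $m$) needs no induction. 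The obstacles you flag are indeed resolvable from the axioms, with two small fixes: take $\sigma_{n}$ to be the \emph{increasing} enumeration of $A_{n}$, so that axiom (iii) literally says $R_{n}$ is well defined with $\left\Vert R_{n}\right\Vert\leq1$ (for a non-monotone bijection the axioms give you nothing); and the existence of $c_{n}=\lim_{k}c_{n}^{(k)}$ follows because $c_{n}^{(k)}\pi_{m}\left(T_{0}(v)\right)$ converges in $X$ and $\pi_{m}\left(T_{0}(v)\right)\neq0$, so the scalar sequence is Cauchy. Your $W$ is closed and infinite dimensional, hence settles spaceability (and automatically $\dim W\geq\mathfrak{c}$); what it does not yield is the paper's stronger $(1,\mathfrak{c})$-spaceability, since $T_{0}\notin W$, but that refinement is not part of the statement you were asked to prove.
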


As a matter of fact, in Theorem \ref{t2} we prove an even stronger result: we
show that $\mathcal{I}$ is $\left(  1,\mathfrak{c}\right)  $-spaceable,
according to the notion recently introduced in \cite{cc} which shall be
recalled later.

The paper is organized as follows: in Section 2 we introduce the definition of
standard Banach sequence spaces and prove Theorem \ref{t1} and some
corollaries. In Section 3 we prove Theorem \ref{t2} and present some consequences.

\section{Spaceability of continuous surjective linear operators}

Let $X\neq\left\{  0\right\}  $ be a Banach space. By a standard Banach
sequence space over $X$ we mean an infinite-dimensional Banach space $E$ of
$X$-valued sequences enjoying the following conditions:

\begin{enumerate}
\item[(i)] There is $C>0$ such that
\[
\left\Vert x_{j}\right\Vert _{X}\leq C\left\Vert x\right\Vert _{E}%
\]
for every $x=\left(  x_{j}\right)  _{j=1}^{\infty}\in E$ and all
$j\in\mathbb{N}$.

\item[(ii)] If $x=\left(  x_{j}\right)  _{j=1}^{\infty}\in E$ and $\left(
x_{n_{k}}\right)  _{k=1}^{\infty}$ is a subsequence of $x$ then $\left(
x_{n_{k}}\right)  _{k=1}^{\infty}\in E$ and
\[
\left\Vert \left(  x_{n_{k}}\right)  _{k=1}^{\infty}\right\Vert _{E}%
\leq\left\Vert x\right\Vert _{E}\text{.}%
\]

\item[(iii)] If $\left(  x_{j}\right)  _{j=1}^{\infty}\in E$ and $\left\{
n_{1}<n_{2}<n_{3}<\cdots\right\}  $ is an infinite subset of $\mathbb{N}$,
then the $X$-valued sequence $\left(  y_{j}\right)  _{j=1}^{\infty}$ defined
as%
\[
y_{j}=\left\{
\begin{array}
[c]{ll}%
x_{i}\text{,} & \text{if }j=n_{i}\text{,}\\
0\text{,} & \text{otherwise,}%
\end{array}
\right.
\]
belongs to $E$ and
\[
\left\Vert \left(  y_{j}\right)  _{j=1}^{\infty}\right\Vert _{E}\leq\left\Vert
\left(  x_{j}\right)  _{j=1}^{\infty}\right\Vert _{E}\text{.}%
\]

\end{enumerate}

\noindent From now on we shall call standard Banach sequence space for a
standard Banach sequence space over some Banach space $X$ and, when
$c_{00}\left(  X\right)  $ is dense in $E$, we say that $E$ is a $c_{00}%
$-dense standard Banach sequence space.

Notice that (i) ensures that the $m$-th projection over $X$%
\begin{align*}
\pi_{m}\colon E  &  \rightarrow X\\
\left(  x_{j}\right)  _{j=1}^{\infty}  &  \mapsto x_{m}%
\end{align*}
is a continuous linear operator for every $m$. Therefore, pointwise
convergence implies coordinatewise convergence. Also, (ii) yields that if
$x\in E$ then each subsequence of $x$ belongs to $E$. Moreover, if
$\mathbb{N}^{\prime}$ is an infinite subset of positive integers, then the
linear operator%
\begin{align*}
T\colon E  &  \rightarrow E\\
\left(  x_{j}\right)  _{j=1}^{\infty}  &  \mapsto\left(  x_{k}\right)
_{k\in\mathbb{N}^{\prime}}%
\end{align*}
is well-defined and continuous.

Finally, from (iii) we have that$\ $if $\mathbb{N}^{\prime}=\left\{
n_{1}<n_{2}<n_{3}<\cdots\right\}  $ is an infinite subset of positive integers
then the linear operator%
\begin{align*}
S\colon E  &  \rightarrow E\\
\left(  x_{j}\right)  _{j=1}^{\infty}  &  \mapsto\left(  y_{j}\right)
_{j=1}^{\infty}%
\end{align*}
where
\[
y_{j}=\left\{
\begin{array}
[c]{ll}%
x_{i}\text{,} & \text{if }j=n_{i}\in\mathbb{N}^{\prime}\text{,}\\
0\text{,} & \text{otherwise,}%
\end{array}
\right.
\]
is continuous and well-defined. In particular, if
\begin{align}
F^{n}\colon E  &  \rightarrow E\label{hhhhh}\\
\left(  x_{j}\right)  _{j=1}^{\infty}  &  \mapsto(\underset{n}{\underbrace
{0,\ldots,0}},x_{1},x_{2},x_{3},\ldots)\nonumber
\end{align}
is the forward $n$-shift then $F^{n}$ is continuous and well-defined.

Now we are ready to prove Theorem \ref{t1}. Splitting the natural numbers in
disjoint infinite subsets $\mathbb{N}_{1},\mathbb{N}_{2},\ldots$ and denoting
the elements of $\mathbb{N}_{k}$ as
\[
\mathbb{N}_{k}=\{n_{k,1},n_{k,2},n_{k,3},\ldots\}
\]
we define, for all $k$, the operators
\begin{align*}
S_{k}\colon E &  \rightarrow E\\
\left(  a_{j}\right)  _{j=1}^{\infty} &  \mapsto\left(  a_{j}\right)
_{j\in\mathbb{N}_{k}}\text{.}%
\end{align*}
By (ii), for all $k$, we have
\[
\Vert S_{k}\Vert=\sup_{\left\Vert \left(  a_{j}\right)  _{j=1}^{\infty
}\right\Vert _{E}\leq1}\Vert S_{k}(a_{j})_{j=1}^{\infty}\Vert_{E}\leq1\text{.}%
\]
It is obvious that $S_{k}$ is surjective for all $k$. In fact, given
$c=(c_{j})_{j=1}^{\infty}\in E$, note that by (iii) we have that
$a=(a_{j})_{j=1}^{\infty}$ defined as
\[
a_{j}=\left\{
\begin{array}
[c]{ll}%
c_{i}\text{,} & \text{if }j=n_{k,i}\in\mathbb{N}_{k}\text{,}\\
0\text{,} & \text{otherwise,}%
\end{array}
\right.
\]
belongs to $E$ and it is obvious that $S(a)=c$. It is also simple to verify
that non trivial linear combinations of $S_{k}$ are also surjective. Let%
\[
S=%
{\textstyle\sum\limits_{i=1}^{n}}
b_{i}S_{i}%
\]
be a non trivial linear combination of $S_{1},\ldots,S_{n}$ and let $k$ be an
index such that $b_{k}\neq0$. Note that, given $c=\left(  c_{j}\right)
_{j=1}^{\infty}\in E$, if we consider the sequence $a=\left(  a_{j}\right)
_{j=1}^{\infty}$ defined as%
\[
a_{j}=\left\{
\begin{array}
[c]{ll}%
b_{k}^{-1}c_{i}\text{,} & \text{if }j=n_{k,i}\in\mathbb{N}_{k}\text{,}\\
0\text{,} & \text{if }j\notin\mathbb{N}_{k}\text{,}%
\end{array}
\right.
\]
then $S_{i}\left(  a\right)  =0$ if $i\neq k$ and
\[
S\left(  a\right)  =b_{k}S_{k}\left(  a\right)  =c\text{.}%
\]
As a consequence, $\left\{  S_{k}:k\in\mathbb{N}\right\}  $ is a linearly
independent subset of $\mathcal{L}\left(  E,E\right)  $. In fact, any non
trivial linear combination of elements of $\left\{  S_{k}:k\in\mathbb{N}%
\right\}  $ is surjective and, in particular, different from $0$. Now
consider
\begin{align*}
\Psi\colon\ell_{1} &  \rightarrow\mathcal{L}\left(  E;E\right)  \\
\left(  b_{k}\right)  _{k=1}^{\infty} &  \mapsto%
{\textstyle\sum\limits_{k=1}^{\infty}}
b_{k}S_{k}\text{.}%
\end{align*}
Note that $\Psi$ is well-defined. In fact, since $\Vert S_{k}\Vert\leq1$, we
have
\[%
{\textstyle\sum\limits_{k=1}^{\infty}}
\left\Vert b_{k}S_{k}\right\Vert \leq%
{\textstyle\sum\limits_{k=1}^{\infty}}
\left\vert b_{k}\right\vert <\infty
\]
and since $\mathcal{L}\left(  E;E\right)  $ is complete, it follows that $%
{\textstyle\sum\limits_{k=1}^{\infty}}
b_{k}S_{k}$ belongs to $\mathcal{L}\left(  E;E\right)  $.

Also, the same argument used before for finite sums is straightforwardly
adapted to prove that $%
{\textstyle\sum\limits_{k=1}^{\infty}}
b_{k}S_{k}$ is always surjective whenever $\left(  b_{k}\right)
_{k=1}^{\infty}\neq0$ and, in particular, $\Psi$ is injective.

It remains to prove the spaceability of the set $\mathcal{S}$ defined in
Theorem \ref{t1}. Let us denote by $\operatorname{Im}(\Psi)$ the image of
$\Psi$ and by $\overline{\operatorname{Im}(\Psi)}$ its closure in
$\mathcal{L}\left(  E;E\right)  $. Let $0\neq S\in\overline{\operatorname{Im}%
(\Psi)}$; we only need to prove that $S$ is surjective. Consider a sequence of
elements $g_{n}=\sum\limits_{k=1}^{\infty}b_{k}^{(n)}S_{k}\in\operatorname{Im}%
\left(  \Psi\right)  $ converging to $S$. Hence, for each $a=\left(
a_{j}\right)  _{j=1}^{\infty}\in E$ we have%
\[
S\left(  a\right)  =\lim\limits_{n\rightarrow\infty}%
{\textstyle\sum\limits_{k=1}^{\infty}}
b_{k}^{(n)}S_{k}\left(  a\right)
\]
and, since $\pi_{m}$ is continuous for all $m\in\mathbb{N}$,
\[
\lim_{n\rightarrow\infty}\pi_{m}\left(
{\textstyle\sum\limits_{k=1}^{\infty}}
b_{k}^{(n)}S_{k}\left(  a\right)  \right)  =\pi_{m}\left(  S\left(  a\right)
\right)  \text{.}%
\]
So, we have
\begin{equation}
S\left(  a\right)  =\left(  \lim_{n\rightarrow\infty}\pi_{1}\left(
{\textstyle\sum\limits_{k=1}^{\infty}}
b_{k}^{(n)}S_{k}\left(  a\right)  \right)  ,\lim_{n\rightarrow\infty}\pi
_{2}\left(
{\textstyle\sum\limits_{k=1}^{\infty}}
b_{k}^{(n)}S_{k}\left(  a\right)  \right)  ,\ldots\right)  \text{,}
\label{777}%
\end{equation}
for all $a=\left(  a_{j}\right)  _{j=1}^{\infty}\in E$. Since
\begin{align*}
\pi_{1}\left(
{\textstyle\sum\limits_{k=1}^{\infty}}
b_{k}^{(n)}S_{k}\left(  a\right)  \right)   &  =b_{1}^{(n)}a_{n_{1,1}}%
+b_{2}^{(n)}a_{n_{2,1}}+b_{3}^{(n)}a_{n_{3,1}}+\cdots=%
{\textstyle\sum\limits_{k=1}^{\infty}}
b_{k}^{\left(  n\right)  }a_{n_{k,1}}\\
\pi_{2}\left(
{\textstyle\sum\limits_{k=1}^{\infty}}
b_{k}^{(n)}S_{k}\left(  a\right)  \right)   &  =b_{1}^{(n)}a_{n_{1,2}}%
+b_{2}^{(n)}a_{n_{2,2}}+b_{3}^{(n)}a_{n_{3,2}}+\cdots=%
{\textstyle\sum\limits_{k=1}^{\infty}}
b_{k}^{\left(  n\right)  }a_{n_{k,2}}\\
&  \frac{{}}{{}}\text{ }\vdots
\end{align*}
by (\ref{777}) we conclude that
\[
S\left(  a\right)  =\left(  \lim_{n\rightarrow\infty}%
{\textstyle\sum\limits_{k=1}^{\infty}}
b_{k}^{\left(  n\right)  }a_{n_{k,1}},\lim_{n\rightarrow\infty}%
{\textstyle\sum\limits_{k=1}^{\infty}}
b_{k}^{\left(  n\right)  }a_{n_{k,2}},\lim_{n\rightarrow\infty}%
{\textstyle\sum\limits_{k=1}^{\infty}}
b_{k}^{\left(  n\right)  }a_{n_{k,3}},\ldots\right)
\]
for all $a=\left(  a_{j}\right)  _{j=1}^{\infty}\in E$. Denote by $xe_{i}$ the
sequence having $x$ in the $i$-th entry and zero elsewhere. Thus, for all
$i\in\mathbb{N}$, there are $k,m\in\mathbb{N}$ such that $i=n_{k,m}%
\in\mathbb{N}_{k}$ and hence%
\[
S\left(  xe_{i}\right)  =(\underset{m-1}{\underbrace{0,\ldots,0}}%
,x\lim_{n\rightarrow\infty}b_{k}^{(n)},0,0,\ldots)\text{,}%
\]
for all $x\in X$. This shows that the all the limits $\lim
\limits_{n\rightarrow\infty}b_{j}^{(n)}$ exist, for all $j\in\mathbb{N}$.
Since $S\neq0$ is continuous and $c_{00}\left(  X\right)  $ is dense in $E$,
we conclude that
\[
\lim_{n\rightarrow\infty}b_{j_{0}}^{(n)}\neq0
\]
for some $j_{0}$. There is no loss of generality in supposing $j_{0}=1$.

Given%
\[
c=\left(  c_{j}\right)  _{j=1}^{\infty}\in E\text{,}%
\]
by (iii), the sequence $d=\left(  d_{j}\right)  _{j=1}^{\infty}$ defined as%
\[
d_{j}=\left\{
\begin{array}
[c]{ll}%
c_{i}\left(  \lim_{n\rightarrow\infty}b_{1}^{(n)}\right)  ^{-1}\text{,} &
\text{if }j=n_{1,i}\text{,}\\
0\text{,} & \text{if }j\notin\mathbb{N}_{1}%
\end{array}
\right.
\]
belongs to $E$ and recalling that
\[
S\left(  a\right)  =\left(  \lim_{n\rightarrow\infty}%
{\textstyle\sum\limits_{k=1}^{\infty}}
b_{k}^{\left(  n\right)  }a_{n_{k,1}},\lim_{n\rightarrow\infty}%
{\textstyle\sum\limits_{k=1}^{\infty}}
b_{k}^{\left(  n\right)  }a_{n_{k,2}},\lim_{n\rightarrow\infty}%
{\textstyle\sum\limits_{k=1}^{\infty}}
b_{k}^{\left(  n\right)  }a_{n_{k,3}},\ldots\right)
\]
for all $a=\left(  a_{j}\right)  _{j=1}^{\infty}\in E$, we have%
\[
S\left(  d\right)  =\left(  \lim_{n\rightarrow\infty}b_{1}^{(n)}d_{1}%
,\lim_{n\rightarrow\infty}b_{1}^{(n)}d_{2},\lim_{n\rightarrow\infty}%
b_{1}^{(n)}d_{3},\lim_{n\rightarrow\infty}b_{1}^{(n)}d_{5},\lim_{n\rightarrow
\infty}b_{1}^{(n)}d_{7},\ldots\right)  =c\text{.}%
\]
This proves Theorem \ref{t1}.

\begin{corollary}
Let $E_{1},\ldots,E_{m}$ be infinite-dimensional Banach spaces and let $E$ be
a $c_{00}$-dense standard Banach sequence space. If there is a surjective
multilinear operator from $E_{1}\times\cdots\times E_{m}$ to $E$, then the set
of all surjective multilinear forms from $E_{1}\times\cdots\times E_{m}$ to
$E$ is $\mathfrak{c}$-lineable.
\end{corollary}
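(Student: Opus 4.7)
The plan is to reduce to Theorem \ref{t1} by post-composing the given surjective multilinear operator with the $\mathfrak{c}$-dimensional space of surjective linear endomorphisms of $E$ produced there.

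First, I would apply Theorem \ref{t1} to obtain a closed infinite-dimensional subspace $W \subseteq \mathcal{L}(E;E)$ with $W\setminus\{0\} \subseteq \mathcal{S}$. Since any infinite-dimensional Banach space has Hamel dimension at least $\mathfrak{c}$ (a standard Baire-category observation), the space $W$ contains a linearly independent family $\{T_\lambda\}_{\lambda\in\Lambda}$ of cardinality $\mathfrak{c}$; moreover, because every non-trivial linear combination of the $T_\lambda$'s still belongs to $W\setminus\{0\}$, such a combination is automatically a surjective continuous linear operator $E\to E$.

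Next, letting $A\colon E_1\times\cdots\times E_m\to E$ be the given surjective continuous multilinear operator, I would define
\[
A_\lambda := T_\lambda\circ A, \qquad \lambda\in\Lambda.
\]
Each $A_\lambda$ is a continuous $m$-linear operator into $E$, and it is surjective as a composition of two surjective maps. The key observation is that the linear map
\[
\Phi\colon \mathcal{L}(E;E)\longrightarrow \mathcal{L}(E_1,\ldots,E_m;E),\qquad \Phi(T)=T\circ A,
\]
is \emph{injective}: if $T\circ A=0$ on $E_1\times\cdots\times E_m$, then $T$ vanishes on the range of $A$, which is all of $E$. Consequently $\{A_\lambda\}_{\lambda\in\Lambda}=\{\Phi(T_\lambda)\}_{\lambda\in\Lambda}$ is a linearly independent family of cardinality $\mathfrak{c}$, and any non-trivial finite linear combination satisfies
\[
\sum_{i} c_i A_{\lambda_i} \;=\; \Bigl(\sum_i c_i T_{\lambda_i}\Bigr)\circ A,
\]
which is surjective because $\sum_i c_i T_{\lambda_i}\in\mathcal{S}$ and $A$ is surjective. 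This exhibits a $\mathfrak{c}$-dimensional subspace of surjective multilinear operators (together with $0$), proving the claimed $\mathfrak{c}$-lineability.

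There is essentially no hard step here; the proof is a direct reduction to Theorem \ref{t1}. The only point worth emphasizing is that spaceability of $\mathcal{S}$ already yields $\mathfrak{c}$ many linearly independent surjections with the property that \emph{every} non-zero linear combination is still surjective (via the closed subspace $W$), and post-composition with a surjective multilinear operator transports this algebraic structure faithfully into $\mathcal{L}(E_1,\ldots,E_m;E)$.
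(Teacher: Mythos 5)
Your proposal is correct and takes essentially the same approach as the paper: fix the given surjective multilinear operator $T_0$ and post-compose it with the $\mathfrak{c}$-dimensional subspace $W\subset\mathcal{S}\cup\{0\}$ of surjective endomorphisms of $E$ produced in the proof of Theorem \ref{t1}. Your explicit verification that $u\mapsto u\circ T_0$ is injective (so the dimension $\mathfrak{c}$ is preserved) simply spells out what the paper dismisses as ``plain.''
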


\begin{proof}
Let $\mathcal{S}_{m}$ be the set of all surjective multilinear forms from
$E_{1}\times\cdots\times E_{m}$ to $E$. Let us fix $T_{0}\in\mathcal{S}_{m}$
and consider the set%
\[
W_{m}=\left\{  u\circ T_{0}:u\in V\right\}  \text{,}%
\]
where $W$ is the $\mathfrak{c}$-dimensional subspace of $\mathcal{S}%
\cup\left\{  0\right\}  $ in the proof of Theorem \ref{t1}. It is plain that
$W_{m}$ is a $\mathfrak{c}$-dimensional subspace contained in $\mathcal{S}%
_{m}\cup\left\{  0\right\}  $.
\end{proof}

A similar argument proves that the same holds for polynomials:

\begin{corollary}
Let $E$ be an infinite-dimensional Banach space and $F$ be a $c_{00}$-dense
standard Banach sequence space. If there is a surjective $m$-homogeneous
polynomial from $E$ to $F$, then the set of all surjective $m$-homogeneous
polynomials from $E$ to $F$ is $\mathfrak{c}$-lineable.
\end{corollary}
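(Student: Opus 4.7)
The plan is to mirror the proof of the preceding corollary, with $m$-homogeneous polynomials replacing multilinear operators. First I fix a surjective $m$-homogeneous polynomial $P_0\colon E\to F$, whose existence is the hypothesis. Since $F$ is a $c_{00}$-dense standard Banach sequence space, Theorem~\ref{t1} (more precisely, the explicit construction in its proof) supplies a $\mathfrak{c}$-dimensional subspace $W\subseteq\mathcal{L}(F;F)$, namely the closure $\overline{\operatorname{Im}(\Psi)}$, every nonzero element of which is a surjective operator $F\to F$.

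I then consider
\[
W_m := \{\, u\circ P_0 : u\in W \,\}
\]
and verify three things. First, $u\circ P_0$ is again an $m$-homogeneous polynomial from $E$ to $F$ whenever $u\in\mathcal{L}(F;F)$: if $\hat P_0$ is the continuous symmetric $m$-linear form associated to $P_0$, then $u\circ \hat P_0$ is a continuous symmetric $m$-linear form whose diagonal restriction equals $u\circ P_0$. Second, the map $\Phi\colon W\to W_m$ defined by $\Phi(u)=u\circ P_0$ is linear. Third, $\Phi$ is injective: if $u\circ P_0=0$, then $u$ vanishes on the range of $P_0$, which equals $F$ by surjectivity of $P_0$, so $u=0$. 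These facts imply $\dim W_m=\dim W=\mathfrak{c}$.

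Finally, every nonzero element of $W_m$ is surjective: if $u\in W\setminus\{0\}$, then $u$ is surjective by the choice of $W$ and $P_0$ is surjective by hypothesis, so their composition is surjective. Hence $W_m$ is a $\mathfrak{c}$-dimensional subspace of the union of $\{0\}$ with the set of surjective $m$-homogeneous polynomials from $E$ to $F$, establishing $\mathfrak{c}$-lineability.

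Because the argument is a clean transport of Theorem~\ref{t1} via composition with a fixed surjective polynomial, no substantial obstacle arises. The only subtlety worth highlighting is the verification that post-composing an $m$-homogeneous polynomial with a continuous linear operator preserves both $m$-homogeneity and continuity, which is immediate from the standard definition of $m$-homogeneous polynomials.
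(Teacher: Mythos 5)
Your proposal is correct and follows essentially the same route as the paper, which proves this corollary by composing the $\mathfrak{c}$-dimensional subspace $W$ of surjective operators from Theorem \ref{t1} with a fixed surjective $m$-homogeneous polynomial $P_{0}$, exactly as you do. You merely spell out the details the paper leaves implicit (linearity and injectivity of $u\mapsto u\circ P_{0}$ and preservation of $m$-homogeneity), and these verifications are all correct.
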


\section{Spaceability of continuous injective linear operators}

We begin by recalling a more restrictive and somewhat geometric approach to
lineability and spaceability, recently introduced in \cite{cc}. Namely, let
$\alpha$, $\beta$ and $\lambda$ be cardinal numbers and $V$ be a vector space,
with $\dim V=\lambda$ and $\alpha<\beta\leq\lambda$. A set $A\subset V$ is
$\left(  \alpha,\beta\right)  $-lineable if it is $\alpha$-lineable and for
every subspace $W_{\alpha}\subset V$ with $W_{\alpha}\subset A\cup\left\{
0\right\}  $ and $\dim W_{\alpha}=\alpha$, there is a subspace $W_{\beta
}\subset V$ with $\dim W_{\beta}=\beta$ and $W_{\alpha}\subset W_{\beta
}\subset A\cup\left\{  0\right\}  $. Furthermore, if $W_{\beta}$ can be chosen
to be a closed subspace, we say that $A$ is $\left(  \alpha,\beta\right)
$-spaceable. Observe that the ordinary notions of lineability and spaceability
are recovered when $\alpha=0$.

Now we are able to begin the proof of Theorem \ref{t2}. Let us assume that
$\mathcal{I}$ is non empty, and let us fix $T\in\mathcal{I}$. For all
$n\in\mathbb{N}$, let $F^{n}\colon E\rightarrow E$ be the forward $n$-shift
defined in (\ref{hhhhh}). Defining $T_{1}=T$ and $T_{n+1}=F^{n}\circ T$, since
$F^{n}$ is a continuous linear operator, it is immediate that

\begin{itemize}
\item $T_{n}\in\mathcal{I}$, for all $n\in\mathbb{N}$;

\item $\left\Vert T_{n+1}\right\Vert \leq\left\Vert F^{n}\right\Vert
\left\Vert T\right\Vert $, for all $n\in\mathbb{N}$.
\end{itemize}

For the sake of simplicity, we will write $T\left(  x\right)  =\left(  \left(
T\left(  x\right)  \right)  _{n}\right)  _{n=1}^{\infty}$, where $\left(
T\left(  x\right)  \right)  _{n}=\pi_{n}\left(  T\left(  x\right)  \right)  $.
Now, let $\alpha_{1},\ldots,\alpha_{k}$ be non-null scalars, $j_{1}%
<\cdots<j_{k}$ be natural numbers and let us consider the continuous linear
operator%
\[
A=\alpha_{1}T_{j_{1}}+\alpha_{2}T_{j_{2}}+\cdots+\alpha_{k}T_{j_{k}}\text{.}%
\]
Let us show that $A$ is injective. Let $x\in V\backslash\left\{  0\right\}  $.
In this case, $T\left(  x\right)  \neq0$ and, consequently, if $n_{0}$ is the
smallest positive integer such that $\left(  T\left(  x\right)  \right)
_{n_{0}}\neq0$, then $\left(  T_{j_{1}}\left(  x\right)  \right)
_{j_{1}+n_{0}-1}=\left(  T\left(  x\right)  \right)  _{n_{0}}$ is the first
non-null coordinate of $T_{j_{1}}\left(  x\right)  $. Since $j_{1}<j_{i}$,
$i=2,\ldots,k$, we conclude that $\left(  T_{j_{i}}\left(  x\right)  \right)
_{j_{1}+n_{0}-1}=0$ for each $i=2,\ldots,k$ and, therefore, summing coordinate
by coordinate, we can infer that%
\[
\alpha_{1}\left(  T_{j_{1}}\left(  x\right)  \right)  _{j_{1}+n_{0}-1}%
=\alpha_{1}\left(  T\left(  x\right)  \right)  _{n_{0}}%
\]
is the first non-null coordinate of $A\left(  x\right)  $. In particular,
$A\left(  x\right)  \neq0;$ therefore $A$ is injective. Hence, every finite
non trivial linear combination of $T_{n}$ originates an injective linear
operator and, in particular, also originates a non-null linear operator and,
consequently, $\left\{  T_{n}:n\in\mathbb{N}\right\}  $ is linearly
independent. Notice that, at this point, it was established that, whenever $E$
is a Banach sequence space in which the forward $n$-shift $F^{n}$ is
well-defined and continuous, then $\mathcal{I}$ is $\left(  1,\aleph
_{0}\right)  $-lineable.

As we know, $E$ is a standard Banach sequence space over a Banach space $X$
and, by condition (iii) of the definition of standard Banach sequence space,
we have $\left\Vert F^{n}\right\Vert \leq1$ for all $n$. Let $x\in V$ and
$\left(  \lambda_{k}\right)  _{k=1}^{\infty}\in\ell_{1}$. Since%
\begin{align*}
\left\Vert
{\textstyle\sum\limits_{n=1}^{\infty}}
\lambda_{n}T_{n}\left(  x\right)  \right\Vert _{E}  &  \leq%
{\textstyle\sum\limits_{n=1}^{\infty}}
\left\vert \lambda_{n}\right\vert \left\Vert T_{n}\left(  x\right)
\right\Vert _{E}\\
&  \leq%
{\textstyle\sum\limits_{n=1}^{\infty}}
\left\vert \lambda_{n}\right\vert \left\Vert T_{n}\right\Vert \left\Vert
x\right\Vert _{V}\\
&  \leq\left\Vert T\right\Vert \left\Vert x\right\Vert _{V}%
{\textstyle\sum\limits_{n=1}^{\infty}}
\left\vert \lambda_{n}\right\vert ,
\end{align*}
it follows that the linear operator
\begin{align*}
\Phi\colon\ell_{1}  &  \rightarrow\mathcal{L}\left(  V,E\right) \\
\left(  \lambda_{k}\right)  _{k=1}^{\infty}  &  \mapsto%
{\textstyle\sum\limits_{n=1}^{\infty}}
\lambda_{n}T_{n}%
\end{align*}
is well-defined and continuous. Note that the same argument we have used to
prove that the operator $A$ above is injective shows that if $\left(
\lambda_{k}\right)  _{k=1}^{\infty}\in\ell_{1}\backslash\left\{  0\right\}  $,
then $%
{\textstyle\sum\limits_{n=1}^{\infty}}
\lambda_{n}T_{n}$ is injective and, therefore, $\Phi$ is also injective. Thus,%
\[
\operatorname{Im}\left(  \Phi\right)  =\left\{
{\textstyle\sum\limits_{n=1}^{\infty}}
\lambda_{n}T_{n}:\left(  \lambda_{k}\right)  _{k=1}^{\infty}\in\ell
_{1}\right\}  \subset\mathcal{I}\cup\left\{  0\right\}
\]
and
\[
\dim\left(  \operatorname{Im}\left(  \Phi\right)  \right)  =\dim\left(
\ell_{1}\right)  =\mathfrak{c}\text{.}%
\]
Finally, from the arbitrariness of $T\in\mathcal{I}$, we finally conclude the
proof that $\mathcal{I}$ is $\left(  1,\mathfrak{c}\right)  $-spaceable if we
show that%
\[
\overline{\operatorname{Im}\left(  \Phi\right)  }=\overline{\left\{
{\textstyle\sum\limits_{n=1}^{\infty}}
\lambda_{n}T_{n}:\left(  \lambda_{k}\right)  _{k=1}^{\infty}\in\ell
_{1}\right\}  }\subset\mathcal{I}\cup\left\{  0\right\}  \text{.}%
\]
If $S\in\overline{\operatorname{Im}\left(  \Phi\right)  }$, then there are
sequences $\left(  \lambda_{n}^{\left(  k\right)  }\right)  _{n=1}^{\infty}%
\in\ell_{1}$ such that
\[
S=\lim_{k\rightarrow\infty}%
{\textstyle\sum\limits_{n=1}^{\infty}}
\lambda_{n}^{\left(  k\right)  }T_{n}\text{.}%
\]
Since $E$ is a standard Banach sequence space and $S$ is continuous, we have%
\begin{align}
S\left(  x\right)   &  =\lim_{k\rightarrow\infty}%
{\textstyle\sum\limits_{n=1}^{\infty}}
\lambda_{n}^{\left(  k\right)  }T_{n}\left(  x\right) \nonumber\\
&  =\lim_{k\rightarrow\infty}%
{\textstyle\sum\limits_{n=1}^{\infty}}
\lambda_{n}^{\left(  k\right)  }(\overset{n-1\text{ zeros}}{\overbrace
{0,\ldots,0}},\pi_{1}\left(  T\left(  x\right)  \right)  ,\pi_{2}\left(
T\left(  x\right)  \right)  ,\pi_{3}\left(  T\left(  x\right)  \right)
,\ldots)\nonumber\\
&  =\lim_{k\rightarrow\infty}\left[  \lambda_{1}^{\left(  k\right)  }\left(
\pi_{1}\left(  T\left(  x\right)  \right)  ,\pi_{2}\left(  T\left(  x\right)
\right)  ,\pi_{3}\left(  T\left(  x\right)  \right)  ,\pi_{4}\left(  T\left(
x\right)  \right)  \ldots\right)  \right. \nonumber\\
&  \frac{{}}{{}}\text{ \ \ \ \ \ \ \ \ \ \ \ \ \ \ \ \ \ \ }+\lambda
_{2}^{\left(  k\right)  }\left(  0,\pi_{1}\left(  T\left(  x\right)  \right)
,\pi_{2}\left(  T\left(  x\right)  \right)  ,\pi_{3}\left(  T\left(  x\right)
\right)  \ldots\right) \nonumber\\
&  \frac{{}}{{}}\text{ \ \ \ \ \ \ \ \ \ \ \ \ \ \ \ \ \ \ }\left.
+\lambda_{3}^{\left(  k\right)  }\left(  0,0,\pi_{1}\left(  T\left(  x\right)
\right)  ,\pi_{2}\left(  T\left(  x\right)  \right)  ,\ldots\right)
+\cdots\right] \nonumber\\
&  =\lim_{k\rightarrow\infty}\left(  \lambda_{1}^{\left(  k\right)  }\pi
_{1}\left(  T\left(  x\right)  \right)  ,\lambda_{1}^{\left(  k\right)  }%
\pi_{2}\left(  T\left(  x\right)  \right)  +\lambda_{2}^{\left(  k\right)
}\pi_{1}\left(  T\left(  x\right)  \right)  ,\ldots\right) \nonumber\\
&  =\left(  \lim_{k\rightarrow\infty}\lambda_{1}^{\left(  k\right)  }\pi
_{1}\left(  T\left(  x\right)  \right)  ,\lim_{k\rightarrow\infty}\left[
\lambda_{1}^{\left(  k\right)  }\pi_{2}\left(  T\left(  x\right)  \right)
+\lambda_{2}^{\left(  k\right)  }\pi_{1}\left(  T\left(  x\right)  \right)
\right]  ,\ldots\right)  \text{.} \label{www}%
\end{align}

Let us show that $S$ is either identically zero or injective. Assume that $S$
is not injective, i.e. that there exists $w_{0}\in V\backslash\left\{
0\right\}  $ such that $S\left(  w_{0}\right)  =0$. Since $T$ is injective,
then $T\left(  w_{0}\right)  \neq0$; let $n_{0}$ be the smallest positive
integer such that $\pi_{n_{0}}\left(  T\left(  w_{0}\right)  \right)  \neq0$.
Notice that%
\[
0=\pi_{n}\left(  S\left(  w_{0}\right)  \right)  =\lim\limits_{k\rightarrow
\infty}%
{\textstyle\sum\limits_{j=1}^{n}}
\lambda_{n-j+1}^{\left(  k\right)  }\pi_{j}\left(  T\left(  w_{0}\right)
\right)  \text{,}%
\]
for all $n\in\mathbb{N}$. Now, we shall proceed by induction in $m$ to show
that $\lim\limits_{k\rightarrow\infty}\lambda_{m}=0$ for all $m\in\mathbb{N}$.
We can see that%
\begin{align*}
0=\pi_{n_{0}}\left(  S\left(  w_{0}\right)  \right)   &  =\lim
\limits_{k\rightarrow\infty}%
{\textstyle\sum\limits_{j=1}^{n_{0}}}
\lambda_{n_{0}-j+1}^{\left(  k\right)  }\pi_{j}\left(  T\left(  w_{0}\right)
\right)  \\
&  =\lim_{k\rightarrow\infty}\lambda_{1}^{\left(  k\right)  }\pi_{n_{0}%
}\left(  T\left(  w_{0}\right)  \right)  =\left(  \lim_{k\rightarrow\infty
}\lambda_{1}^{\left(  k\right)  }\right)  \pi_{n_{0}}\left(  T\left(
w_{0}\right)  \right)
\end{align*}
and, so,
\[
\lim_{k\rightarrow\infty}\lambda_{1}^{\left(  k\right)  }=0\text{.}%
\]
Assuming by induction hypothesis that
\[
\lim\limits_{k\rightarrow\infty}\lambda_{1}^{\left(  k\right)  }%
=\lim\limits_{k\rightarrow\infty}\lambda_{2}^{\left(  k\right)  }=\cdots
=\lim_{k\rightarrow\infty}\lambda_{m-1}^{\left(  k\right)  }=0\text{,}%
\]
for a certain $m\in\mathbb{N}$, it is obvious that for all $n=1,\ldots,m-1$,
we have%
\[
0=\lim\limits_{k\rightarrow\infty}\lambda_{n}^{\left(  k\right)  }\pi
_{j}\left(  T\left(  w_{0}\right)  \right)
\]
for all $j\in\mathbb{N}$. So, since%
\[
0=\pi_{n_{0}+m-1}\left(  S\left(  w_{0}\right)  \right)  =\lim
\limits_{k\rightarrow\infty}%
{\textstyle\sum\limits_{n=1}^{m}}
\lambda_{n}^{\left(  k\right)  }\pi_{n_{0}+m-n}\left(  T\left(  w_{0}\right)
\right)  \text{,}%
\]
we obtain
\begin{align*}
\lim_{k\rightarrow\infty}\lambda_{m}^{\left(  k\right)  }\pi_{n_{0}}\left(
T\left(  w_{0}\right)  \right)   &  =\lim\limits_{k\rightarrow\infty}\left[
{\textstyle\sum\limits_{n=1}^{m}}
\lambda_{n}^{\left(  k\right)  }\pi_{n_{0}+m-n}\left(  T\left(  w_{0}\right)
\right)  -%
{\textstyle\sum\limits_{n=1}^{m-1}}
\lambda_{n}^{\left(  k\right)  }\pi_{n_{0}+m-n}\left(  T\left(  w_{0}\right)
\right)  \right]  \\
&  =\lim\limits_{k\rightarrow\infty}%
{\textstyle\sum\limits_{n=1}^{m}}
\lambda_{n}^{\left(  k\right)  }\pi_{n_{0}+m-n}\left(  T\left(  w_{0}\right)
\right)  -\lim\limits_{k\rightarrow\infty}%
{\textstyle\sum\limits_{n=1}^{m-1}}
\lambda_{n}^{\left(  k\right)  }\pi_{n_{0}+m-n}\left(  T\left(  w_{0}\right)
\right)  \\
&  =\pi_{n_{0}+m-1}\left(  S\left(  w_{0}\right)  \right)  -%
{\textstyle\sum\limits_{n=1}^{m-1}}
\lim\limits_{k\rightarrow\infty}\lambda_{n}^{\left(  k\right)  }\pi
_{n_{0}+m-n}\left(  T\left(  w_{0}\right)  \right)  \\
&  =0\text{,}%
\end{align*}
and thus%
\[
\lim\limits_{k\rightarrow\infty}\lambda_{m}^{\left(  k\right)  }=0\text{.}%
\]
Hence, $\lim\limits_{k\rightarrow\infty}\lambda_{m}^{\left(  k\right)  }=0$
for all $m\in\mathbb{N}$. Consequently, for all $j\in\mathbb{N}$ and all $x\in
V$, the limit $\lim\limits_{k\rightarrow\infty}\lambda_{m}^{\left(  k\right)
}\pi_{j}\left(  T\left(  x\right)  \right)  $ exists and it is equal to zero.
Thus, by (\ref{www}), we have%
\begin{align*}
S\left(  x\right)   &  =\left(  \lim_{k\rightarrow\infty}\lambda_{1}^{\left(
k\right)  }\pi_{1}\left(  T\left(  x\right)  \right)  ,\lim_{k\rightarrow
\infty}\left[  \lambda_{1}^{\left(  k\right)  }\pi_{2}\left(  T\left(
x\right)  \right)  +\lambda_{2}^{\left(  k\right)  }\pi_{1}\left(  T\left(
x\right)  \right)  \right]  ,\ldots\right)  \\
&  =\left(  \lim_{k\rightarrow\infty}\lambda_{1}^{\left(  k\right)  }\pi
_{1}\left(  T\left(  x\right)  \right)  ,\lim_{k\rightarrow\infty}\lambda
_{1}^{\left(  k\right)  }\pi_{2}\left(  T\left(  x\right)  \right)
+\lim_{k\rightarrow\infty}\lambda_{2}^{\left(  k\right)  }\pi_{1}\left(
T\left(  x\right)  \right)  ,\ldots\right)  \\
&  =0
\end{align*}
for all $x\in V$. Therefore,%
\[
\overline{\operatorname{Im}\left(  \Phi\right)  }\subset\mathcal{I}%
\cup\left\{  0\right\}
\]
and the proof of Theorem \ref{t2} is completed.

The following result is a straightforward consequence of the above proof:

\begin{corollary}
Let $V$ be an infinite dimensional Banach space and let $E$ be a Banach
infinite dimensional sequence space in which the forward shift $F=F^{1}\colon
E\rightarrow E$ is well-defined and continuous. The set%
\[
\mathcal{I}=\left\{  T\colon V\rightarrow E:T\text{\ is linear, continuous and
injective}\right\}
\]
is either empty or $\left(  1,\aleph_{0}\right)  $-lineable.
\end{corollary}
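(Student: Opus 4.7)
The corollary is essentially a distillation of an intermediate observation already made inside the proof of Theorem \ref{t2}, so the plan is to isolate that observation and verify that it only requires continuity of $F^1$ rather than the full strength of a standard Banach sequence space.

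First I would reduce the hypothesis: if $F = F^{1}\colon E \to E$ is well-defined and continuous, then by composing $n$ times, each $F^{n}$ is also well-defined and continuous, so the apparatus used in the lineability part of the proof of Theorem \ref{t2} is already available. Assuming $\mathcal{I}$ is nonempty, I would fix any $T \in \mathcal{I}$ and define, exactly as in Theorem \ref{t2}, the family
\[
T_{1} = T, \qquad T_{n+1} = F^{n} \circ T \quad (n \in \mathbb{N}).
\]
Each $T_{n}$ is a continuous linear operator $V \to E$. Injectivity of each $T_{n}$ follows from the injectivity of $T$ together with the fact that $F^{n}$ is injective (a left-shift followed by evaluation recovers the original sequence, so $F^{n}(y) = 0$ forces $y = 0$).

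Next I would show that every nontrivial finite linear combination $A = \alpha_{1}T_{j_{1}} + \cdots + \alpha_{k}T_{j_{k}}$ with $j_{1} < \cdots < j_{k}$ and $\alpha_{i} \neq 0$ is injective, repeating verbatim the "first nonzero coordinate" argument from Theorem \ref{t2}: for $x \neq 0$, if $n_{0}$ is the smallest index with $\pi_{n_{0}}(T(x)) \neq 0$, then the $(j_{1} + n_{0} - 1)$-th coordinate of $T_{j_{i}}(x)$ vanishes for $i \geq 2$ (because $j_{i} > j_{1}$ shifts the first nonzero entry strictly further to the right), while for $i = 1$ that coordinate equals $\alpha_{1}\pi_{n_{0}}(T(x)) \neq 0$. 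Hence $A(x) \neq 0$. This simultaneously proves that $\{T_{n} : n \in \mathbb{N}\}$ is linearly independent.

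Finally I would conclude: given any one-dimensional subspace $W_{1} \subset \mathcal{I} \cup \{0\}$, pick a generator $T \in \mathcal{I}$ and take
\[
W_{\aleph_{0}} = \operatorname{span}\{T_{n} : n \in \mathbb{N}\}.
\]
Then $\dim W_{\aleph_{0}} = \aleph_{0}$, $W_{1} \subset W_{\aleph_{0}}$ (since $T = T_{1}$), and $W_{\aleph_{0}} \subset \mathcal{I} \cup \{0\}$ by the previous step. This gives the required $(1,\aleph_{0})$-lineability. There is no genuine obstacle here beyond observing that closedness (spaceability) and the tail $c_{00}$-density used elsewhere in Theorem \ref{t2} were only needed to go from $\aleph_{0}$ to $\mathfrak{c}$ and to handle limits in $\overline{\operatorname{Im}(\Phi)}$; the purely algebraic $\aleph_{0}$-lineability statement goes through with the weaker hypothesis.
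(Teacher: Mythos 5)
Your proof is correct and is essentially the paper's own argument: the authors explicitly note inside the proof of Theorem \ref{t2} that the construction $T_{1}=T$, $T_{n+1}=F^{n}\circ T$ together with the ``first nonzero coordinate'' injectivity argument already yields $(1,\aleph_{0})$-lineability whenever the forward shifts are well-defined and continuous, and they derive the corollary exactly as you do. Your added observations (that $F^{n}=(F^{1})^{n}$ inherits continuity from $F^{1}$, and that spaceability/$c_{00}$-density are only needed for the passage to $\mathfrak{c}$ and for closures) are accurate and match the paper's intent.
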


As in the case of surjective polynomials, we have now a similar result for
injective polynomials.

\begin{corollary}
Let $V$ be an infinite dimensional Banach space, let $E$ be a standard Banach
sequence space and consider the set%
\[
\mathcal{I}_{m}=\left\{  P\colon V\rightarrow E:P\text{\ is an injective
}m\text{-homogeneous polynomial}\right\}  \text{.}%
\]
If $\mathcal{I}_{m}\neq\emptyset$, then $\mathcal{I}_{m}$ is $\left(
1,\mathfrak{c}\right)  $-lineable.
\end{corollary}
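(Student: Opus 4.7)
The plan is to mirror the strategy already used for the preceding corollary on surjective multilinear/polynomial maps, but now feeding in the injective analogue: compose a fixed injective polynomial on the right with a large family of injective endomorphisms of $E$ supplied by Theorem \ref{t2}. Concretely, I would fix any $P_{0}\in\mathcal{I}_{m}$ and apply Theorem \ref{t2} with the roles of $V$ and $T$ taken by $E$ and $\operatorname{Id}_{E}$ respectively (note $\operatorname{Id}_{E}$ is trivially injective, so the hypothesis $\mathcal{I}\neq\emptyset$ is satisfied). This yields a $\mathfrak{c}$-dimensional closed subspace $W\subset\mathcal{L}(E;E)$ with $W\subset\mathcal{I}\cup\{0\}$; moreover, inspecting the construction in the proof of Theorem \ref{t2}, the operator $T_{1}=T=\operatorname{Id}_{E}$ belongs to $W$.

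Next, I would put
\[
W_{m}=\{\,u\circ P_{0}:u\in W\,\}.
\]
Since composition of a continuous linear map with an $m$-homogeneous polynomial is again a continuous $m$-homogeneous polynomial, $W_{m}$ is a linear subspace of the space of $m$-homogeneous polynomials from $V$ to $E$; it contains $P_{0}=\operatorname{Id}_{E}\circ P_{0}$, so $\operatorname{span}(P_{0})\subset W_{m}$. For a non-zero $u\in W$, the operator $u$ is injective and $P_{0}$ is injective, hence the composition $u\circ P_{0}$ is injective, which gives $W_{m}\subset\mathcal{I}_{m}\cup\{0\}$.

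The remaining point is that $\dim W_{m}=\mathfrak{c}$, and for this it suffices to verify that the linear map $u\mapsto u\circ P_{0}$ from $W$ to $W_{m}$ is injective. If $u\in W$ satisfied $u\circ P_{0}=0$ with $u\neq 0$, then by construction of $W$ the operator $u$ would be injective; picking any $x\in V\setminus\{0\}$ one has $P_{0}(x)\neq 0$ (as $P_{0}\in\mathcal{I}_{m}$), whence $u(P_{0}(x))\neq 0$, contradicting $u\circ P_{0}=0$. Therefore $u=0$, the map $u\mapsto u\circ P_{0}$ is injective, and $\dim W_{m}=\dim W=\mathfrak{c}$. Since $P_{0}$ generates an arbitrary one-dimensional subspace of $\mathcal{I}_{m}\cup\{0\}$, this delivers $(1,\mathfrak{c})$-lineability.

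The only real content beyond bookkeeping is this last injectivity step, which would be the main point to check carefully: it is here, and only here, that one uses the strong conclusion of Theorem \ref{t2} that \emph{every} non-zero element of $W$ is globally injective (and not merely non-zero), since this is exactly what prevents $u$ from vanishing on the a priori small set $P_{0}(V)$. Everything else is a direct transcription of the multilinear corollary with $T_{0}$ replaced by $P_{0}$ and $u$ ranging over the injective family $W$ rather than an arbitrary $\mathfrak{c}$-dimensional subspace of $\mathcal{L}(E;E)$.
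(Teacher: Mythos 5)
Your proposal is correct and follows essentially the same route as the paper: fix $P_{0}\in\mathcal{I}_{m}$, take the closed $\mathfrak{c}$-dimensional subspace $W\subset\mathcal{I}\cup\{0\}$ from Theorem \ref{t2} chosen (via $V=E$, $T=\operatorname{id}_{E}$) to contain the identity, and set $W_{m}=\{u\circ P_{0}:u\in W\}$. Your explicit check that $u\mapsto u\circ P_{0}$ is injective (hence $\dim W_{m}=\mathfrak{c}$) is a detail the paper leaves implicit, and it is exactly the right justification.
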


\begin{proof}
Let $P_{0}\in\mathcal{I}_{m}$ and consider the set%
\[
W_{m}=\left\{  u\circ P_{0}:u\in W\right\}  \text{,}%
\]
where $W$ is a closed $\mathfrak{c}$-dimensional subspace within
$\mathcal{I}\cup\left\{  0\right\}  $ that we can choose containing the
identity operator $\operatorname{id}\colon E\rightarrow E\in W$. Notice that
$P_{0}\in W_{m}$ (we just have to take $u=\operatorname{id}$). Since $W_{m}$
is $\mathfrak{c}$-dimensional and $W_{m}\subset\mathcal{I}_{m}\cup\left\{
0\right\}  $, the proof is done.
\end{proof}


\begin{thebibliography}{99}                                                                                               %


\bibitem {Nacib}N. Albuquerque, L. Bernal-Gonz\'{a}lez, D. Pellegrino, J. B.
Seoane-Sep\'{u}lveda, \emph{Peano curves on topological vector spaces}, Linear
Algebra Appl. \textbf{460 }(2014), 81--96.

\bibitem {Aron}R. M. Aron, V. I. Gurariy, J. B. Seoane-Sep\'{u}lveda,
\emph{Lineability and spaceability of sets of functions on }$\mathbb{R}$,
Proc. Amer. Math. Soc.\textbf{ 133} (2005), 795--803.

\bibitem {bb}R. M. Aron, L. Bernal-Gonz\'{a}lez, P.
Jim\'{e}nez-Rodr\'{\i}guez, G. Mu\~{n}oz-Fern\'{a}ndez, J. B.
Seoane-Sep\'{u}lveda, \emph{On the size of special families of linear
operators}, Linear Algebra Appl. \textbf{544} (2018), 186--205.

\bibitem {book}R. M. Aron, L. Bernal-Gonz\'{a}lez, D. Pellegrino, J. B.
Seoane-Sep\'{u}lveda, \emph{Lineability: The Search for Linearity in
Mathematics}, Monographs and Research Notes in Mathematics. CRC Press, Boca
Raton (2016).

\bibitem {bba}A. Bartoszewicz, M. Bienias, S. G{\l }\k{a}b, T. Natkaniec,
\emph{Algebraic structures in the sets of surjective functions}, J. Math.
Anal. Appl. 441 (2016), no. 2, 574--585.

\bibitem {Bernal}L. Bernal-Gonz\'{a}lez, M. O. Cabrera, \emph{Lineability
criteria, with applications}, J. Funct. Anal. \textbf{266} (2014), 3997--4025.

\bibitem {Pellegrino2}G. Botelho, D. Diniz, V.V. F\'{a}varo, D. Pellegrino,
\emph{Spaceability in Banach and quasi-Banach sequence spaces}, Linear
Algebra. Appl. \textbf{434} (2011), 1255--1260.

\bibitem {BF}G. Botelho, V.V. F\'{a}varo, \emph{Constructing Banach spaces of
vector-valued sequences with special properties}, Michigan Math. J.
\textbf{64} (2015), 539--554.

\bibitem {CFS}D. Cariello, V.V. F\'{a}varo, J. B. Seoane-Sep\'{u}lveda,
\emph{Self-similar functions, fractals and algebraic genericity}, Proc. Amer.
Math. Soc. \textbf{145} (2017), 4151--4159.

\bibitem {cariellojfa}D. Cariello and J. B. Seoane-Sep\'{u}lveda, \emph{Basic
sequences and spaceability in }$\ell_{p}$\emph{ spaces}, J. Funct. Anal.
\textbf{266} (2014), 3797--3814.

\bibitem {FGMR}J. Falc\'{o}, D. Garc\'{\i}a, M. Maestre, P. Rueda,
\emph{Spaceability in norm-attaining sets}, Banach J. Math. Anal. \textbf{11}
(2017), no. 1, 90--107.

\bibitem {cc}V.V. F\'{a}varo, D. Pellegrino, D. Tomaz, \emph{Lineability and
spaceability: a new approach}, Bull Braz Math Soc, New Series \textbf{51}
(2020), 27--46.

\bibitem {SSJB}J. B. Seoane-Sep\'{u}lveda,\emph{ Chaos and lineability of
pathological phenomena in analysis}, ProQuest LLC, Ann Arbor, MI. Thesis
(Ph.D.)-Kent State University (2006).
\end{thebibliography}
\end{document}